\title[On Noether's Degree Bound]{On Noether's Degree Bound for Finite Group Schemes}
\author{Gregor Kemper}
\address[GK]{TU M\"unchen, School of CIT, Department of Mathematics (M11), Boltzmannstr.~3, 85748 Garching bei M\"unchen, Germany}
\email{kemper@ma.tum.de}
\author{Christian Liedtke}
\address[CL]{TU M\"unchen, School of CIT, Department of Mathematics (M11), Boltzmannstr.~3, 85748 Garching bei M\"unchen, Germany}
\email{christian.liedtke@tum.de}
\author{Christiane Ott}
\address[CO]{TU M\"unchen, School of CIT, Department of Mathematics (M11), Boltzmannstr.~3, 85748 Garching bei M\"unchen, Germany}
\email{christiane.ott@tum.de}
\subjclass[2020]{13A50, 14L24, 14L15}
\keywords{linear group scheme actions, invariant subring, Noether's degree bound, linearly reductive group scheme}
\newcommand{\NN}{{\mathbb{N}}}
\newcommand{\ZZ}{{\mathbb{Z}}}
\newcommand{\QQ}{{\mathbb{Q}}}
\newcommand{\RR}{{\mathbb{R}}}
\newcommand{\CC}{{\mathbb{C}}}
\newcommand{\GG}{{\mathbb{G}}}
\newcommand{\Spec}{{\rm Spec}\:}
\newcommand{\GL}{{\mathbf{GL}}}
\newcommand{\OO}{{\mathcal O}}
\newcommand{\et}{{\rm \acute{e}t}}
\newcommand{\bmu}{\boldsymbol{\mu}}
\newcommand{\balpha}{\boldsymbol{\alpha}}
\DeclareMathOperator{\diag}{diag}
\theoremstyle{definition}
\newtheorem{Definition}{Definition}[section]
\newtheorem{Example}[Definition]{Example}
\newtheorem{Remark}[Definition]{Remark}
\theoremstyle{plain}
\newtheorem{Theorem}[Definition]{Theorem}
\newtheorem{Proposition}[Definition]{Proposition}
\newtheorem{Lemma}[Definition]{Lemma}
\begin{document}

\begin{abstract}
This paper establishes Noether's classical degree bound $\beta(G)\leq|G|$ for finite and linearly reductive
group schemes. On the other hand, we provide examples of infinitesimal group schemes where $\beta(G)$ is unbounded.
We also generalize Molien's formula to finite and linearly reductive
group schemes.
\end{abstract}

\maketitle

\section{Introduction}

If a finite group $G$ acts linearly on a polynomial ring $k[x_1,...,x_n]$, where~$k$ is a field of
characteristic zero, then a famous theorem of Emmy Noether \cite{Noether} states that
the invariant ring $k[x_1,...,x_n]^G$ can be generated by elements of degree at most the
order $|G|$ of $G$.
This was extended to finite groups whose order is prime to the characteristic of $k$
by Peter Fleischmann \cite{Fleischmann} and John Fogarty \cite{Fogarty}. On the other hand, 
if the characteristic~$p$ of~$k$ divides $|G|$, Noether's bound often fails. For example, David Richman 
\cite{Richman90} proved that for the indecomposable two-dimensional representation $V_2$ of the cyclic group $G = \ZZ/p\ZZ$,
generating the invariant ring $k[m V_2]^G$ of a direct sum of~$m$ copies of $V_2$ requires invariants of a degree
that increases linearly with~$m$. So for $G = \ZZ/p\ZZ$ there exists no degree bound only depending on $G$.

Together with Noether's degree bound, Molien's formula is another classical tool in invariant theory of finite groups
in characteristic~$0$. The formula computes the Hilbert series of the invariant ring without needing to touch a single
invariant. It is well known that Molien's formula extends to the case where the characteristic does
not divide the group order by using a Brauer lift (see \cite[Section~3.4.19]{DK}).

The purpose of this paper is to explore to what extent these results generalize to the case of finite group schemes.
For finite groups, the ``good case'' is the one where the characteristic does not divide the group order. This
arithmetic condition is equivalent to the more conceptual condition that the group be linearly reductive, which
says that every finite-dimensional k-linear representation is a direct sum of irreducible representations.
As it turns out, linear reductivity is the ``correct'' condition when passing from groups to group schemes.

After introducing our notation and making some preparations in \cref{SPrelim}, 
we show in \cref{sNoether} that Noether's bound holds for linearly reductive finite group schemes. This is done
by first considering the connected component of the identity and then the maximal \'etale quotient of $G$.

In many ways the infinitesimal group scheme~$\balpha_p$ forms an analogue to the cyclic group $\ZZ/p\ZZ$. It is also not
linearly reductive. In \cref{sAlpha} we show that for the indecomposable two-dimensional representation $V_2$ of
$G = \balpha_p$, generating the invariant ring $k[m V_2]^G$ of a direct sum of~$m$ copies of $V_2$ requires invariants
of a degree that increases linearly with~$m$. This is the exact analogue of Richman's result mentioned above. In fact,
our proof works more generally for $G = \balpha_q$, with~$q$ a power of~$p$.

Finally in \cref{sMolien} we provide a version of Molien's formula that works for finite linearly reductive group schemes.
This is achieved by a lift to the ring of Witt vectors.

For a general introduction to finite group schemes we refer readers to Michel Brion's 
recent survey article
\cite{Brion}

\section{Preliminaries} \label{SPrelim}

Let~$k$ be a field and let $G\to\Spec k$ be a finite group scheme.
The $k$-algebra $H^0(G,\OO_G)$ carries the structure of a commutative
(but not necessarily co-com\-mu\-ta\-tive), finite-dimensional Hopf algebra over~$k$.
By definition, $|G|:=\dim_k H^0(G,\OO_G)$ is the \emph{order} of $G$ (also called \emph{length} in the context of group schemes).
We also denote the dual Hopf algebra by $k[G]$.
If $G$ is a constant group scheme associated to the finite group $G_{\mathrm{abs}}:=G(k)$
of $k$-rational points, then $k[G]$ is isomorphic to the group algebra $k[G_{\mathrm{abs}}]$.

If $\rho:G\to\GL(V)$ is a finite-dimensional representation,
then we consider the induced
$G$-action on the symmetric $k$-algebra $S(V):=S^\bullet(V)\cong k[x_1,...,x_n]$,
where $n=\dim_k(V)$, and have the invariant subring $S(V)^G\subseteq S(V)$.
We let $\beta(G,\rho)$ - or simply $\beta(\rho)$ if $G$ is clear from the context -
be the minimal number $m$ such that $S(V)^G$
can be generated as a $k$-algebra by homogeneous
invariants of degree $m$.
We also define
$$
\beta(G) \,:=\,\sup_{\rho\,\in\,\mathrm{Rep}_k(G)}\left\{ \beta(\rho) \right\}
\quad\in\quad\ZZ_{\geq0}\cup\{\infty\},
$$
where $\mathrm{Rep}_k(G)$ is the set of all finite-dimensional and $k$-linear representations of $G$.

The following lemma will be used to reduce to the case that~$k$ is algebraically closed.

\begin{Lemma}
\label{lem: reduce to kbar}
 Let $k\subseteq K$ be a field extension.
 Let $G\to\Spec k$ be a finite group scheme and let $\rho:G\to\GL(V)$ be a finite
 dimensional $k$-linear representation.
 If $\rho_K:G_K:=G\times_{\Spec k}\Spec K\to\GL(V\otimes_kK)$
 is the base-change to $K$, then
 $$
   \beta(G,\rho) \,=\, \beta(G_K,\rho_K).
 $$
\end{Lemma}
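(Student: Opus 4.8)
The plan is to show that invariant generation is compatible with field extension, so that the minimal generating degree is unchanged. The key structural fact is that the invariant ring behaves well under the flat base change $k \subseteq K$. Concretely, I would first establish the identity
\begin{equation}
\label{eq:base-change-invariants}
S(V)^G \otimes_k K \;\cong\; \bigl(S(V\otimes_k K)\bigr)^{G_K}
\end{equation}
as graded $K$-algebras. The point is that taking $G$-invariants is the kernel of the two coaction-type maps $S(V) \rightrightarrows S(V) \otimes_k k[G]$ (equivalently, a left-exact functor given by a finite limit), and since $K$ is flat over $k$, tensoring with $K$ commutes with this kernel. One also uses the canonical isomorphism $S(V) \otimes_k K \cong S(V \otimes_k K)$ of graded algebras together with the identification $k[G] \otimes_k K \cong K[G_K]$ coming from the definition of base change. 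I expect this step to be the main obstacle, in the sense that it requires being careful about which exactness/flatness statement one invokes and making the comparison of coaction maps precise; once \cref{eq:base-change-invariants} is in hand the rest is formal.

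Granting \cref{eq:base-change-invariants}, I would prove both inequalities $\beta(G,\rho) \le \beta(G_K,\rho_K)$ and $\beta(G_K,\rho_K) \le \beta(G,\rho)$ separately. For $\beta(G_K,\rho_K) \le \beta(G,\rho)$, set $m = \beta(G,\rho)$ and pick homogeneous generators $f_1,\dots,f_r \in S(V)^G$ of degree at most $m$. Their images $f_i \otimes 1$ lie in $(S(V \otimes_k K))^{G_K}$ by \cref{eq:base-change-invariants}, and since scalar extension of a generating set yields a generating set of the extended algebra (the $f_i$ generate $S(V)^G \otimes_k K$ over $K$, which equals the invariant ring on the right), the invariants over $K$ are generated in degree at most $m$.

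For the reverse inequality $\beta(G,\rho) \le \beta(G_K,\rho_K)$, I would argue degree by degree, which is cleaner than manipulating explicit generators. Writing $R = S(V)^G$ as a graded $k$-algebra with irrelevant-type decomposition, the invariant ring is generated in degrees at most $d$ if and only if the multiplication map $\bigoplus_{i=1}^{d-1} R_i \cdot R_{d} \to R_{d+j}$ surjects onto each graded piece in high degree; more efficiently, $R$ is generated in degrees $\le d$ precisely when the graded vector space $R_{>0}/(R_{>0})^2$ is concentrated in degrees $\le d$. Since $R$ is a finitely generated graded $k$-algebra, this cokernel condition is a statement about the vanishing of certain finite-dimensional graded $k$-vector spaces, and by \cref{eq:base-change-invariants} together with the flatness of $K/k$ each graded piece satisfies $\dim_k(R_j) = \dim_K\bigl((R \otimes_k K)_j\bigr)$ and the decomposability module base-changes correctly. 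Hence $R$ is generated in degree $\le d$ over $k$ if and only if $R \otimes_k K$ is generated in degree $\le d$ over $K$, giving equality of the two $\beta$-values. The only care needed is to confirm that $S(V)^G$ is a finitely generated $k$-algebra so that these minimal-degree quantities are well defined and finite; this holds because $G$ is finite, so $S(V)$ is a finite module over $S(V)^G$ and Noetherian finiteness applies.
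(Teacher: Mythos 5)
Your proof is correct and takes essentially the same route as the paper's: both rest on the flat base-change isomorphism $S(V)^G \otimes_k K \cong \bigl(S(V\otimes_k K)\bigr)^{G_K}$ together with the graded Nakayama characterization of $\beta(\rho)$ as the top degree of $R_{>0}/(R_{>0})^2$. Your equalizer/flatness argument merely makes explicit what the paper dismisses as ``easy to see,'' and your separate generator-extension inequality is subsumed by the Nakayama criterion, so the substance is identical.
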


\begin{proof}
Let $n:=\dim_k(V)$ and then, we have $S:=S^\bullet(V)\cong k[x_1,...,x_n]$,
which we consider as a graded ring with the standard grading, that is, 
$\deg(x_i)=1$ for all $i=1,...,n$.
We have an induced grading on the invariant subring
$R:=S^G\subseteq S$, that is, we find
$R=\bigoplus_{i\geq 0} R_i$ and we have $R_0=k$.
We set $R_+:=\bigoplus_{i\geq1}R_i$, which is an ideal of $R$ 
and define $A(R):=R_+/R_+^2$, which is a finite-dimensional and 
graded $k$-vector space.
It follows from the graded Nakayama lemma that 
$\beta(\rho)$ is equal to the highest degree of an element in $A(R)$.
It is easy to see that there exists an isomorphism of graded $K$-vector
spaces $A(R)\otimes_kK\cong A((S\otimes_kK)^{G_K})$, which implies
that $\beta(G,\rho)=\beta(R)$ coincides with $\beta(G_K,\rho_K)=\beta((S\otimes_kK)^{G_K})$.
\end{proof}

Recall that for a finite group scheme $G\to\Spec k$
there exists a short exact sequence,
called the \emph{connected-\'etale sequence}
$$
1\,\to\,G^\circ\,\to\,G\,\to\,G^{\et}\,\to\,1,
$$
where $G^\circ$ is the connected component of
the identity and $G^\et$ is the maximal \'etale quotient.
If $k$ is perfect, then this sequence splits and 
we obtain an isomorphism $G\cong G^\circ\rtimes G^\et$ (see \cite[Theorem~1.3.13]{Brion}).

If $k\subseteq L$ is a field extension, then a finite group
scheme $G\to\Spec k$ is linearly reductive if and only
if $G_L:=G\times_{\Spec k}\Spec L\to\Spec L$ is linearly
reductive \cite[Proposition 2.4]{AOV}.

Moreover, there is a classification of finite and linearly
reductive group schemes, due to Chin, Hashimoto,
Nagata, and Abramovich--Olsson--Vistoli
(partly independent of each other) 
\cite{Chin, Hashimoto, Nagata, AOV}.
This classification states that $G\to\Spec k$
is linearly reductive if and only if $G^\et$ is of
length prime to $p$ and $G^\circ$ is
of multiplicative type.
In particular,
\begin{enumerate}
\item if $p=0$, then all finite
group schemes over $k$ are linearly reductive.
If $k$ is algebraically closed, then $G$ is the constant
group scheme associated to a finite group.
\item If $k$ is algebraically closed and $p>0$, 
then $G$ is linearly reductive if and only if 
 $G^\et$ is the constant group 
scheme associated to a finite group of order prime to
$p$ and $G^\circ$ is a product of 
group schemes of the form $\bmu_{p^n}$.
\end{enumerate}

\section{Degree bounds} \label{sNoether}

In this section $G$ is a linearly reductive finite group scheme over
a field~$k$ of characteristic~$p > 0$. The goal
is to show that $\beta(G) \le |G|$, so by \cref{lem: reduce to kbar} we may assume
that~$k$ is algebraically closed. By
\cite[Section~(2.10)]{Hashimoto} we have
$G = G^\circ\rtimes G^\et$. We treat $G^\circ$ in the first step and
then $G^\et$. Both are linearly reductive by
\cite[Lemma~2.2]{Hashimoto}.

\begin{Lemma} \label{lG0}%
  In the above situation we have $\beta(G^\circ) \le |G^\circ|$.
\end{Lemma}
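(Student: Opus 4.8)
The plan is to reduce the statement to a purely combinatorial fact about monomials, exploiting that $G^\circ$ is of multiplicative type. Since~$k$ is algebraically closed and $p>0$, the classification recalled in \cref{SPrelim} tells us that the linearly reductive connected group scheme $G^\circ$ is a product $\bmu_{p^{n_1}}\times\cdots\times\bmu_{p^{n_r}}$, hence a diagonalizable group scheme whose character group $M=\ZZ/p^{n_1}\ZZ\times\cdots\times\ZZ/p^{n_r}\ZZ$ is a finite abelian group of order $|M|=\prod_i p^{n_i}=|G^\circ|$. The key structural input is that a finite-dimensional representation of a diagonalizable group scheme is the same datum as an $M$-graded vector space: every $\rho\colon G^\circ\to\GL(V)$ decomposes into weight spaces $V=\bigoplus_{\chi\in M}V_\chi$.

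First I would fix a basis of weight vectors, giving coordinates $x_1,\dots,x_n$ on $S(V)\cong k[x_1,\dots,x_n]$ with each $x_i$ homogeneous of some weight $\chi_i\in M$. Then a monomial $x_1^{a_1}\cdots x_n^{a_n}$ has weight $\sum_i a_i\chi_i\in M$, and the invariant ring $S(V)^{G^\circ}$ is precisely the $k$-span of the monomials of weight~$0$. Consequently $S(V)^{G^\circ}$ is generated, as a $k$-algebra, by the \emph{irreducible} invariant monomials, i.e.\ those weight-$0$ monomials that cannot be written as a product of two invariant monomials of strictly smaller positive degree. It therefore suffices to bound the degree of such irreducible monomials by $|M|$, since this then bounds $\beta(\rho)$ for every $\rho$, and hence $\beta(G^\circ)$.

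The heart of the argument---and the step I expect to carry the real content---is a pigeonhole estimate on partial sums. Given an invariant monomial of degree~$d$, expand it as a sequence of weights $\psi_1,\dots,\psi_d\in M$ (each $\chi_i$ repeated $a_i$ times) with $\psi_1+\cdots+\psi_d=0$, and form the partial sums $s_j:=\psi_1+\cdots+\psi_j$ for $0\le j\le d$. If $d>|M|$, then among the $d$ elements $s_0,\dots,s_{d-1}$ two must coincide, say $s_a=s_b$ with $a<b$; the block $\psi_{a+1},\dots,\psi_b$ then has weight~$0$, and both it and its complement are nonempty, exhibiting the monomial as a product of two invariant monomials of smaller positive degree. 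Hence every irreducible invariant monomial has degree at most $|M|=|G^\circ|$, as desired. The only point requiring care is verifying that both factors of the factorization are genuinely nontrivial, which follows from the index restrictions $0\le a<b\le d-1$.
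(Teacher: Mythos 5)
Your proposal is correct and follows essentially the same route as the paper: diagonalize the action of the multiplicative-type group $G^\circ\cong\bmu_{p^{e_1}}\times\cdots\times\bmu_{p^{e_s}}$, observe that invariance reduces to a zero-sum condition on exponent vectors in the character group $M\cong\ZZ/p^{e_1}\ZZ\oplus\cdots\oplus\ZZ/p^{e_s}\ZZ$, and bound the degree of irreducible invariant monomials by $|M|=|G^\circ|$. The only difference is that where the paper invokes Noether's characteristic-zero bound for the corresponding abstract abelian group (or cites Schmid's Lemma~2.1 for a direct argument), you prove the required zero-sum factorization inline by the standard partial-sum pigeonhole --- which is exactly the content of that cited lemma, and your check that both factors of the resulting factorization are nonempty is correct.
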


\begin{proof}
  As mentioned in \cref{SPrelim} we have
  \[
    G^\circ \cong \bmu_{p^{e_1}} \times \cdots \times \bmu_{p^{e_s}}
  \]
  with $s \ge 0$ and $e_i > 1$. Write $\bmu_{p^{e_i}} = \Spec A_i$
  with $A_i = k[t_i]/(t_i^{p^{e_i}} - 1)$. The representations of
  $\bmu_{p^{e_1}} \times \cdots \times \bmu_{p^{e_s}}$ are well known
  (see, e.g. \cite[Section~2.6]{Hashimoto}): with a suitable choice of
  a basis, a representation $\rho:G^\circ\to\GL(V)$ is given by a
  diagonal matrix
  $\diag(\underline{t}^{\underline{w}_1},\ldots,\underline{t}^{\underline{w}_n})
  \in (A_1 \otimes \cdots \otimes A_s)^{n \times n}$, where
  $\underline{t}^{\underline{w}_i}$ stands for
  $t_1^{w_{i,1}} \cdots t_s^{w_{i,s}}$, and $w_{i,j} \in \ZZ$. Thus
  $G^\circ$ permutes the monomials in $S(V) = k[x_1,\ldots,x_n]$,
  and a polynomial is invariant if and only if all of its monomials
  are invariant. Furthermore, a monomial $x_1^{k_1} \cdots x_n^{k_n}$
  is invariant if and only if
  \begin{equation} \label{eqExponents}%
    \sum_{i=1}^n w_{i,j} k_i \equiv 0 \mod p^{e_j} \quad \text{for
      all} \quad j = 1,\ldots,s.
  \end{equation}
  Now we have the exact same description for the invariants for the
  group $\widetilde G = \ZZ/{p^{e_1}}\ZZ \times \cdots \times\ZZ/{p^{e_s}}\ZZ$
  acting on~$\CC^n$, so it follows from the Noether bound in
  characteristic~$0$ that every exponent vector $(k_1,\ldots,k_n)$
  satisfying~\cref{eqExponents} can be written as a sum of
  exponent vectors satisfying~\cref{eqExponents}, such that in each summand the sum of exponents is
  $\le |\widetilde G|$. A direct argument can be found in
  \cite[Lemma~2.1]{Schmid}, which applies by
  regarding~\cref{eqExponents} as a single equation in the group
  $\ZZ/p^{e_1} \ZZ \oplus \cdots \oplus \ZZ/p^{e_s} \ZZ$.

  So we obtain
  $\beta\bigl(k[x_1,\ldots,x_n]^{G^\circ}\bigr) \le |\widetilde G| =
  |G^\circ|$, and the lemma is proved.
\end{proof}

To prepare for the second step we need the following lemma. The first
part of it can be found in the literature and the second part
should be well known as well. We provide a proof for the convenience of the
reader.

\begin{Lemma} \label{lGN}%
  Let $G$ be an affine group scheme over a scheme $S$ (for the purpose
  of this lemma, $G$ need not be linearly reductive or finite) and let
  $\rho:G\to\GL(V)$ be a finite-dimensional representation. Then
  \begin{enumerate}[label=(\alph*)]
  \item \label{lGNA} If $N \subseteq G$ is a normal subgroup scheme, then the
    $G$-action restricts to the invariants $V^N$.
  \item \label{lGNB} If $G = N \rtimes H$ with $H$ another subgroup scheme, then
    \[
      V^G = (V^N)^H.
    \]
  \end{enumerate}
\end{Lemma}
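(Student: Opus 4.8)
The plan is to argue functorially, on $T$-points, with $T$ ranging over all $S$-schemes. The representation $\rho$ makes each pullback $V_T:=V\otimes_{\OO_S}\OO_T$ into a module carrying an action of the abstract group $G(T)$, and these actions are compatible with base change along morphisms $T'\to T$. For any subgroup scheme $M\subseteq G$ I regard the invariants $V^M$ as the subfunctor sending $T$ to
$$
\{\,v\in V_T \ :\ n\cdot v_{T'}=v_{T'}\ \text{for all } T'\to T \text{ and all } n\in M(T')\,\},
$$
which is represented by the sub-$\OO_S$-module of $V$ obtained as the kernel of the difference of the coaction $V\to V\otimes_{\OO_S}\OO(M)$ (the original coaction followed by $\OO(G)\to\OO(M)$) and the map $v\mapsto v\otimes1$. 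By Yoneda it then suffices to verify the module-theoretic assertions on $T$-points. For part \ref{lGNA} I would fix an $S$-scheme $T$, an element $g\in G(T)$ and an invariant $v\in V^N(T)$, and check that $g\cdot v$ again lies in $V^N(T)$: for every $T'\to T$ and every $n\in N(T')$ I compute inside $G(T')$ acting on $V_{T'}$ that
$$
n\cdot(g\cdot v)=\bigl(g\cdot(g^{-1}ng)\bigr)\cdot v=g\cdot\bigl((g^{-1}ng)\cdot v\bigr).
$$
Normality of $N$ gives $g^{-1}ng\in N(T')$, and $v\in V^N(T)$ gives $(g^{-1}ng)\cdot v=v$; hence $n\cdot(g\cdot v)=g\cdot v$, which is exactly the statement that $g\cdot v\in V^N(T)$.

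For part \ref{lGNB}, part \ref{lGNA} first guarantees that $G$, and hence its subgroup scheme $H$, acts on $V^N$, so that $(V^N)^H$ is defined. The inclusion $V^G\subseteq(V^N)^H$ is immediate, since a vector fixed by all of $G$ is in particular fixed by $N$ and by $H$. For the reverse inclusion I would use that a semidirect product is a direct product as a scheme, so that $G(T)=N(T)\rtimes H(T)$ and every $g\in G(T)$ factors as $g=nh$ with $n\in N(T)$ and $h\in H(T)$. Given $v\in(V^N)^H(T)$ and such a factorization over any $T'\to T$, I obtain $g\cdot v=n\cdot(h\cdot v)=n\cdot v=v$, using $h\cdot v=v$ because $v\in(V^N)^H$ and $n\cdot v=v$ because $v\in V^N$. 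This proves $(V^N)^H\subseteq V^G$, and equality follows.

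The computations themselves are the familiar group-theoretic ones, so the main obstacle is foundational bookkeeping rather than genuine difficulty. The step deserving the most care is the passage between the scheme-theoretic claim of part \ref{lGNA} — an assertion about $\OO_S$-modules — and the functorial computation on $T$-points: one must check that $V^N$ really is a subfunctor, that it agrees with the submodule of invariants in the relevant case, and that it is stable under base change, so that Yoneda legitimately reduces the statement to the elementary identities above. I also rely on the pointwise descriptions of normality, namely $g^{-1}ng\in N(T')$, and of the semidirect product, namely $G(T)=N(T)\rtimes H(T)$; both hold by definition once one passes to the functor of points, and the remaining verifications are routine.
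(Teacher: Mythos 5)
Your argument is correct and takes essentially the same route as the paper: for part \ref{lGNB} the paper likewise reduces to $T$-points via Yoneda and factors each $\sigma\in G(T)$ through the semidirect-product decomposition exactly as you do (the order $g=nh$ versus $g=hn$ is immaterial, since both multiplication maps are isomorphisms of schemes). The only difference is that for part \ref{lGNA} the paper simply cites \cite[Lemma~16.3]{Waterhouse}, whereas you spell out the standard conjugation argument $n\cdot(g\cdot v)=g\cdot\bigl((g^{-1}ng)\cdot v\bigr)$ on points, which is precisely the proof behind that citation.
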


\begin{proof}
  \begin{enumerate}
  \item[\ref{lGNA}] 
    See \cite[Lemma~16.3]{Waterhouse}.
  \item[\ref{lGNB}] The inclusion $V^G \subseteq (V^N)^H$ is clear since $H$
    and $N$ are subgroup schemes. For the converse, let
    $v \in (V^N)^H$ and consider the morphism $f_v: G \to V$ given by
    applying $G$ to~$v$. Let $T$ be an $S$-scheme. For
    $\sigma \in G(T)$ there exist $\tau \in N(T)$ and $\rho \in H(T)$
    such that $\sigma = \rho \tau$. Therefore
    $f_v(\sigma) = \sigma(v) = \rho(\tau(v)) = v$. So by Yoneda's
    lemma, $f_v$ is the constant morphism mapping everything to~$v$.
    Thus $v \in V^G$. \endproof
  \end{enumerate}
\end{proof}

Going back to our original situation, let $\rho : G \to \GL(V)$ be a finite-dimensional
representation of the linearly reductive finite group scheme $G$ over $k = \bar k$. Then
by Lemma~\ref{lGN}, $G^\et$ acts on $S(V)^{G^\circ}$ and
$S(V)^G = \bigl(S(V)^{G^\circ}\bigr)^{G^\et}$. But the
invariants of $G^\et$ are the same as those of
$\widehat G := G^\et(k)$, which is a group of order is not
divisible by~$p$. So by \cite[Theorem~3.22]{DK} we obtain, using
Lemma~\ref{lG0},
\[
  \beta\bigl(S(V)^G\bigr) \le |\widehat G| \cdot
\beta\bigl(S(V)^{G^\circ}\bigr) \le |\widehat G| \cdot |G^\circ| =
|G^\et| \cdot |G^\circ| = |G|.
\]
So we have shown:

\begin{Theorem}
  For a linearly reductive finite group scheme $G$ we have
  \[
    \beta(G) \le |G|.
  \]
\end{Theorem}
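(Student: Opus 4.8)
The plan is to follow the two-step decomposition that the discussion above has already set up, reducing the general bound to the two elementary building blocks $G^\circ$ and $G^\et$. First I would apply \cref{lem: reduce to kbar} to replace $k$ by its algebraic closure; this costs nothing, since $\beta(G,\rho)=\beta(G_K,\rho_K)$ and $|G|$ is unaffected by base change. The payoff is that $k$ is now perfect, so the connected-\'etale sequence splits as $G\cong G^\circ\rtimes G^\et$, and the classification recalled in \cref{SPrelim} applies: $G^\circ$ is a product of group schemes $\bmu_{p^{e_i}}$ and $G^\et$ is the constant group scheme of a finite group of order prime to~$p$. Fixing an arbitrary representation $\rho:G\to\GL(V)$, it suffices to prove $\beta\bigl(S(V)^G\bigr)\le|G|$ and then take the supremum over $\rho$.

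The second step treats the connected part, and here I would simply quote \cref{lG0}, which yields $\beta\bigl(S(V)^{G^\circ}\bigr)\le|G^\circ|$. Its proof diagonalizes the $G^\circ$-action so that $G^\circ$ permutes monomials, rewrites invariance of a monomial as the congruence system~\cref{eqExponents}, and then imports the classical characteristic-zero Noether bound for $\ZZ/p^{e_1}\ZZ\times\cdots\times\ZZ/p^{e_s}\ZZ$ via the counting argument of \cite[Lemma~2.1]{Schmid}.

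The third and decisive step passes from $G^\circ$ to all of~$G$ through the \'etale quotient. Since $G^\circ$ is normal in $G$, part~(a) of \cref{lGN} shows that $G^\et$ acts on the subring $R:=S(V)^{G^\circ}$, and the semidirect-product identity in part~(b) gives $S(V)^G=R^{G^\et}$. Because $k=\bar k$, the \'etale group scheme $G^\et$ is the constant group scheme attached to the finite group $\widehat G:=G^\et(k)$, whose scheme-theoretic invariants in any $k$-algebra coincide with the ordinary invariants of the abstract group $\widehat G$; by the classification $|\widehat G|$ is prime to~$p$. I would then invoke the relative (tower) Noether bound \cite[Theorem~3.22]{DK}: if a finite group of order prime to the characteristic acts on a graded $k$-algebra generated in degrees $\le d$, its invariants are generated in degrees $\le|\widehat G|\cdot d$. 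Taking $d=\beta(R)\le|G^\circ|$ from the second step, this gives
\[
  \beta\bigl(S(V)^G\bigr)\,\le\,|\widehat G|\cdot\beta(R)\,\le\,|\widehat G|\cdot|G^\circ|\,=\,|G^\et|\cdot|G^\circ|\,=\,|G|,
\]
and since $\rho$ was arbitrary, $\beta(G)\le|G|$ follows.

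I expect the main obstacle to be this third step, not the arithmetic at the end but the bookkeeping that makes it legitimate: one must check that $G^\et$ genuinely acts on the \emph{subring} $R$ rather than merely on $V$ (supplied by \cref{lGN}\,(a)), that $S(V)^G$ really equals $R^{G^\et}$ (supplied by \cref{lGN}\,(b), which requires the splitting), and above all that the scheme-theoretic $G^\et$-invariants of $R$ agree with the invariants of the underlying finite group $\widehat G$, so that the classical prime-to-$p$ relative bound applies verbatim. By contrast, the base-change reduction is purely formal and the connected case is entirely combinatorial once the representation is diagonalized.
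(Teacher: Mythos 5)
Your proposal is correct and follows essentially the same route as the paper: reduce to $k=\bar k$ via \cref{lem: reduce to kbar}, split $G\cong G^\circ\rtimes G^\et$, bound the connected part by \cref{lG0}, and then combine \cref{lGN} with the identification of scheme-theoretic $G^\et$-invariants with those of the abstract group $\widehat G=G^\et(k)$ and the relative Noether bound \cite[Theorem~3.22]{DK}. The ``bookkeeping'' points you flag at the end are exactly the ones the paper handles via \cref{lGN} and the constancy of $G^\et$ over an algebraically closed field, so nothing is missing.
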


\section{Group schemes that are not linearly reductive}
\label{sAlpha}

There are strong results on the Noether number $\beta(G)$ for finite
groups that are not linearly reductive. In the introduction we
mentioned Richman's \cite{Richman90} result that
$\beta(k[l V_2]^{\ZZ/p\ZZ})$ increases linearly with~$l$, where $l V_2$
denotes the $l$-fold direct sum of the indecomposable two-dimensional
representation of $\ZZ/p\ZZ$ over a field~$k$ of characteristic~$p$. In
fact, later Richman \cite{Richman} showed that this is true in the
much more general situation where $\ZZ/p\ZZ$ is replaced by any finite
group $G$ whose order is divisible by~$p$, and $V_2$ is replaced by
any faithful representation. This implies $\beta(G) = \infty$ for
every finite group that is not linearly reductive.

By letting $G^\circ$ act trivially, this of course extends to group
schemes for which $G^\et$ is not linearly reductive.  However, to the
best of our knowledge no example is known in which an infinitesimal
group scheme $G = G^\circ$ violates Noether's degree bound, or where
we even have $\beta(G) = \infty$.

The following proposition provides such an example by considering the
$l$-fold direct sum of the indecomposable representation $V_2$ of the
infinitesimal group scheme $G = \balpha_q$. Recall that~$\balpha_q$ is
given by the Hopf algebra $H^0(G,\OO_G) = k[t]/(t^q)$, where~$k$ has
characteristic~$p$ and~$q > 1$ is a power of~$p$, with the coproduct
defined by $t \mapsto 1 \otimes t + t \otimes 1$. So the propostion
(or rather, its special case $q = p$) is a direct analog to Richman's
\cite{Richman90} first result. The importance of the infinitesimal
group scheme~$\balpha_p$ is also emphasized by the result \cite[Lemma
2.3]{LRQ} which says that a finite group scheme over an algebraically
closed field of characteristic~$p$ contains a subgroup scheme
isomorphic to $\balpha_p$ or $\ZZ/p\ZZ$. In this sense~$\balpha_p$ and, more
generally, $\balpha_q$ are quintessential examples. 

\newcommand{\pr}{k[x_1, y_1, \dots, x_l, y_l]}
\newcommand{\prg}{k[x_1, y_1, \dots, x_{l+1}, y_{l+1}]}

\begin{Proposition} \label{pAlpha}%
  Consider the following representation
  \begin{align*}
	\rho:
	\left\{ 
	\begin{array} {cl}
		\pr &\to k[t]/(t^q) \otimes \pr\\
		x_i &\mapsto x_i \\
		y_i &\mapsto y_i + tx_i
	\end{array}
	\right.
  \end{align*}
  for the infinitesimal group scheme $\balpha_q$. Then
  \[
    \beta(\balpha_q,\rho) \ge l.
  \]
  In particular, $\beta(\balpha_q) = \infty$.
\end{Proposition}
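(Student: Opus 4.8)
The plan is to exhibit, for each $l$, an explicit invariant of degree $l$ and to show that it cannot be expressed through invariants of lower degree. By the argument in the proof of \cref{lem: reduce to kbar}, $\beta(\rho)$ equals the top degree of a minimal homogeneous generating set of $R:=S(V)^{\balpha_q}$, i.e.\ the top degree in which $A(R)=R_+/R_+^2$ is nonzero; so it suffices to produce an indecomposable invariant of degree $\ge l$. A useful preliminary observation is that the coaction acts inside each ``copy'' $(x_i,y_i)$ separately, so the comodule operators $D^{[m]}$ (the divided powers of the derivation $D=\sum_i x_i\partial_{y_i}$ recording $y_i\mapsto y_i+tx_i$) are homogeneous for the grading of $S(V)$ by the per-copy degrees. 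Hence $R$ is multigraded by $\ZZ_{\ge0}^l$, and we may look for a multihomogeneous generator; the natural place is the multidegree $(1,\dots,1)$, which automatically has total degree $l$.

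To build the candidate I would use the divided-power calculus of the higher derivation $\{D^{[m]}\}$. From the relation $D^{[m]}D^{[q-1]}=\binom{m+q-1}{q-1}D^{[m+q-1]}$ and the congruence $\binom{m+q-1}{q-1}\equiv 0\pmod p$ for $1\le m\le q-1$ (immediate from Lucas' theorem, since $q-1$ has all base-$p$ digits equal to $p-1$), it follows that $D^{[q-1]}(G)$ is $\balpha_q$-invariant for \emph{every} $G\in S(V)$. Applying this to $G=y_1\cdots y_l$ produces
\[
  F \;:=\; D^{[q-1]}(y_1\cdots y_l) \;=\; \sum_{\substack{S\subseteq\{1,\dots,l\}\\ |S|=q-1}} \Big(\prod_{i\in S}x_i\Big)\Big(\prod_{j\notin S}y_j\Big),
\]
a multidegree-$(1,\dots,1)$ invariant of degree $l$ that is squarefree in every variable. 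For $q=2$ this is $F=\sum_i x_i\prod_{j\ne i}y_j$, the exact analog of the invariant underlying Richman's first result.

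It remains to prove indecomposability, and this is where I expect the real work to lie. Since $F$ is multihomogeneous of multidegree $(1,\dots,1)$, any expression of $F$ in $R_+^2$ must, term by term, be a product $g\cdot h$ of invariants that are multilinear in complementary subsets $I$ and $I^c$ of the copies. Thus $F$ is decomposable precisely when the elementary symmetric polynomial $e_{q-1}(z_1,\dots,z_l)$ (the generating function recording the $x$-support of $F$) lies in $\sum_I V_I\cdot V_{I^c}$, where $V_J$ is the space of generating functions of multilinear invariants on the copies in $J$; unwinding $D^{[m]}g=0$ shows that $V_J$ is cut out by explicit vanishing-sum (``cocycle'') conditions on the Boolean lattice of $J$. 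For $q=2$ these conditions forbid any nonzero multilinear invariant of $x$-degree $0$ on a nonempty block, so every product $V_I\cdot V_{I^c}$ vanishes and $F$ is indecomposable outright. For general $q$ the main obstacle is exactly this combinatorial step: one must understand the $S_l$-module $V_{[l]}$ and the subspace spanned by the products $V_I\cdot V_{I^c}$ (whose irreducible content is limited, being induced from proper Young subgroups) and show that $F$ survives in the quotient. I would attack this through the hook Specht modules governing these cocycle spaces, while keeping track of the modular subtleties; for small $l$ relative to $q$ (where this margin can collapse) one falls back on the norm $y_1^q$, an indecomposable invariant of degree $q\ge l$ when $l\le q$, or replaces $F$ by a suitably thickened generator. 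Once indecomposability is established, $F$ is a minimal generator of degree $l$, so $\beta(\balpha_q,\rho)\ge l$, and taking the supremum over $l$ gives $\beta(\balpha_q)=\infty$.
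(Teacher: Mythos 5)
Your construction of invariants is correct and in fact elegant: the Hasse--Schmidt relation $D^{[m]}D^{[q-1]}=\binom{m+q-1}{q-1}D^{[m+q-1]}$ together with Lucas does show that $D^{[q-1]}$ maps all of $S(V)$ into the invariant ring, and this even recovers the paper's key invariant, since $g_l=\pm D^{[q-1]}\bigl((y_1\cdots y_l)^{q-1}\bigr)$ (using $\binom{q-1}{i}\equiv(-1)^i \bmod p$). Your reduction to multihomogeneous generators and your $q=2$ case are also sound. But the heart of the proposition --- the lower bound $\beta(\balpha_q,\rho)\ge l$ for \emph{all} $q$ and $l$ --- is not established: you yourself flag that the indecomposability of $F=D^{[q-1]}(y_1\cdots y_l)$ for general $q$ is an open combinatorial problem in your write-up (the Specht-module analysis is only a plan, and there genuinely are nontrivial multilinear invariants of small $x$-degree to contend with, e.g.\ $x_i$ and $x_iy_j-x_jy_i$, so the span of products $V_I\cdot V_{I^c}$ is far from zero once $q>2$). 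Your fallbacks do not close this: $y_1^q$ only yields $\beta\ge q$, so for $l>q$ the bound $\beta\ge l$ remains unproved, and ``a suitably thickened generator'' is not specified. A further caution: the blanket principle ``no nonconstant invariant has $x$-degree $0$'' is false as stated, since $y_1^q$ is such an invariant; it only holds after imposing $\deg_{y_i}\le q-1$ on each factor, which is exactly the normalization \eqref{sing_deg_y} that the paper makes before its coefficient-comparison argument in \eqref{sing_deg_x}. Your multilinear setting sidesteps this for $F$ itself, but any completion of your approach must keep that hypothesis in view.

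The paper avoids indecomposability altogether, and you could repair your proof the same way by working with your \emph{thickened} invariant from the start: take $g_l=\pm D^{[q-1]}\bigl((y_1\cdots y_l)^{q-1}\bigr)$, which has total degree $l(q-1)$ but total $x$-degree only $q-1$. After reducing any expression $g_l=\sum_r\prod_s h_{r,s}$ to multihomogeneous factors with $\deg_{y_i}h_{r,s}\le q-1$, the key lemma \eqref{sing_deg_x} (every nonconstant invariant factor has positive $x$-degree, proved by expanding $\rho(y_1^w b_w)=(y_1+tx_1)^w\rho(b_w)$ with $1\le w\le q-1$ and comparing coefficients, where $t^{w-j}\ne0$ precisely because $w\le q-1$) shows each product has at most $q-1$ nonconstant factors. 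Pigeonhole on the total degree $l(q-1)$ then forces some factor of degree at least $l$ --- no classification of multilinear invariants, no modular representation theory. This is the step your proposal is missing, and without it (or a completed indecomposability proof for $F$) the claimed bound $\beta(\balpha_q,\rho)\ge l$ stands only for $q=2$ and for $l\le q$.
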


\begin{proof}
For $0 \leq i \leq q-1$, we consider the polynomials
\begin{align*}
	f_{l,i} := \sum_{i_1 + \dots + i_l= i} 
	x_1^{i_1}y_1^{q-1-i_1} \cdots x_l^{i_l}y_l^{q-1-i_l} 
\end{align*}
as well as
\begin{align*}
	g_{l} := f_{l,q-1}.
\end{align*}
In order to see that $g_l$ is an invariant, we first consider the following equation that we will prove via induction:
For every $0 \leq i \leq q-1$, we have
\begin{align}
f_{l,i} = \sum_{j=i}^{q-1} 
\binom{j}{i} t^{j-i}  \rho(f_{l, j}). \label{lem_alpha_q}
\end{align}
To prove this, we will make use of the identity
 \begin{align}
	\sum_{r=0}^{n} \binom{r}{{x}}\binom{n-r}{y} = \binom{n+1}{x+y+1} \label{chu_vand}
\end{align}
for every $x,y,n \in \NN$, see \cite[Proposition 3.32(f)]{Grinberg}. 
We first consider the case where $l = 1$. For every $0 \leq j \leq q-1$, we have $f_{1,j} = x_1^j y_1^{q-1-j}$. Thus,
\begin{align*}
	&\sum_{j=i}^{q-1} 
	\binom{j}{i} t^{j-i}  \rho(f_{1, j})\\
    = &\sum_{j=i}^{q-1} 
	\binom{j}{i} t^{j-i}  \rho \left( x_1^j y_1^{q-1-j} \right)
    \\
	= &\sum_{j=i}^{q-1} 
	\binom{j}{i} t^{j-i}  x_1^j (y_1+ tx_1)^{q-1-j}  \\
	= & \sum_{j=i}^{q-1} 
	\binom{j}{i} t^{j-i}  x_1^j 
	\sum_{s=0}^{q-1-j} \binom{q-1-j}{s} y_1^s t^{q-1-j-s}x_1^{q-1-j-s}
    \\
	= & \sum_{j=i}^{q-1} 
	\binom{j}{i}
		\sum_{s=0}^{q-1-j}
		\binom{q-1-j}{s} y_1^s t^{q-1-s-i}x_1^{q-1-s}
	 \\
    = & 
	\sum_{s=0}^{q-1-i} 
	\sum_{\substack{j=i }}^{q-1-s} 
	\binom{j}{i}
	\binom{q-1-j}{s} y_1^s t^{q-1-s-i}x_1^{q-1-s}\\
	= & 
	\sum_{s=0}^{q-1-i} 
	x_1^{q-1-s} y_1^st^{q-1-s-i}
	\sum_{j=0}^{q-1} 
		\binom{j}{i}
		\binom{q-1-j}{s} \\
	\overset{\cref{chu_vand}}{=} & 
	\sum_{s=0}^{q-1-i} 
	x_1^{q-1-s} y_1^s t^{q-1-s-i}
	\binom{q}{i+s+1}
	\overset{\text{Lucas's Theorem}}{=}  x_1^i y_1^{q-1-i} 
	=  f_{1,i}.
\end{align*}

For the induction step $l \to l+1$, let $0 \leq i \leq q-1$. We need to show that
\begin{align}
	f_{l+1,i}
	= & \sum_{j=i}^{q-1} 
	\binom{j}{i} t^{j-i}  \rho(f_{l+1, j}). \label{lemma_ind}
\end{align}
To do so, we first simplify both sides. This leads to 
\begin{align*}
	f_{l+1,i}
	= & \sum_{s= q-1-i}^{q-1} x_{l+1}^{q-1-s} y_{l+1}^s f_{l, i-(q-1-s)} 
\end{align*}
as well as
\begin{align*}
	& \sum_{j=i}^{q-1} 
		\binom{j}{i} t^{j-i}  \rho(f_{l+1, j})\\
	= & \sum_{j=i}^{q-1} 
		\binom{j}{i} t^{j-i}  \rho 
		\left(
			\sum_{r= 0}^{j} x_{l+1}^r y_{l+1}^{q-1-r} f_{l, j-r}
		\right) \\
	= & \sum_{j=i}^{q-1} 
		\binom{j}{i} t^{j-i}  
			\sum_{r= 0}^{j} x_{l+1}^r (y_{l+1}+tx_{l+1})^{q-1-r} \rho (f_{l, j-r}) \\	
	= & \sum_{j=i}^{q-1} 
		\binom{j}{i} t^{j-i}  
			\sum_{r= 0}^{j} x_{l+1}^r
				\sum_{s=0}^{q-1-r}
				\binom{q-1-r}{s}
				y_{l+1}^s t^{q-1-r-s} x_{l+1}^{q-1-r-s}
				\rho (f_{l, j-r})
			\\	
	= & \sum_{j=i}^{q-1} 
		\binom{j}{i} 
		\sum_{r= 0}^{j} 
			\sum_{s=0}^{q-1-r}
			\binom{q-1-r}{s}
			y_{l+1}^s t^{q-1-r-s+j-i} x_{l+1}^{q-1-s}
			\rho (f_{l, j-r}) \\
    =  & 
	\sum_{s=0}^{q-1}
		\sum_{j=i}^{q-1}  
			\binom{j}{i} 
			\sum_{r= 0}^{
				\substack{
					\min \{ j,
				 	q-1-s \}}				
			} 
				\binom{q-1-r}{s}
				y_{l+1}^s t^{q-1-r-s+j-i} x_{l+1}^{q-1-s}
				\rho (f_{l, j-r})
	\\
	 = & \sum_{s=0}^{q-1}
		x_{l+1}^{q-1-s} y_{l+1}^s 
		\sum_{j=i}^{q-1} 
			\binom{j}{i} 
			\sum_{r= 0}^{
				\substack{
					\min \{ j,
				 	q-1-s \}}
			} 
				\binom{q-1-r}{s}
				t^{q-1-r-s+j-i}
				\rho (f_{l, j-r}).
\end{align*}
Comparing coefficients, it suffices to show that
\begin{align}
	\nonumber
	&\sum_{j=i}^{q-1} 
		\binom{j}{i} 
		\sum_{r= 0}^{\min \{ j, q-1-s \}} 
			\binom{q-1-r}{s}
			t^{q-1-r-s+j-i} 
		\rho (f_{l, j-r})
	\\
	= &
	\left\{ 
		\begin{array} {cl}
			f_{l, i-(q-1-s)}  & \text{for every } q-1-i \leq s \leq q-1 \\
			0 &\text{for every } 0 \leq s < q-1-i . \label{bed_2} 
		\end{array}
	\right. 
\end{align}
We simplify the first line and we obtain
\begin{align}
	\nonumber &\sum_{j=i}^{q-1} 
		\binom{j}{i} 
			\sum_{r= 0}^{\min \{ j, q-1-s \}} 
				\binom{q-1-r}{s}
				t^{q-1-r-s+j-i}
			\rho (f_{l, j-r})
	\\
    \nonumber = & 
	\sum_{\beta=\max \{ 0, i-q+1+s \}}^{q-1}
		t^{\beta-i+q-1-s} 
		\rho (f_{l, \beta})
		\sum_{j= \max \{ i, \beta \}}^{\min \{q-1, \beta + q-1-s \} } 
			\binom{j}{i} 
			\binom{q-1 +\beta-j}{s}
	\\
	\nonumber \overset{\text{Luc.}}{=} & 
	\sum_{\beta=\max \{ 0, i-q+1+s \}}^{q-1}
		t^{\beta-i+q-1-s} 
		\rho (f_{l, \beta})
		\sum_{j= 0}^{q-1+\beta } 
			\binom{j}{i} 
			\binom{q-1 +\beta-j}{s}
		\\	
	\overset{\cref{chu_vand}}{=}  & 
	\sum_{\beta=\max \{ 0, i-q+1+s \}}^{q-1}
		t^{\beta-i+q-1-s} 
		\rho (f_{l, \beta})
		\binom{q+\beta}{i+s+1} .
	\label{eq_t1}
\end{align}
For every $0 \leq s < q-1-i$, this sum is equal to zero by Lucas's Theorem. Hence, the second condition in \cref{bed_2} is satisfied. Now, it only remains to show the first condition. For this purpose, we consider $q-1-i \leq s \leq q-1$. Then, 
\begin{align}
	\nonumber  f_{l, i-(q-1-s)} 
	\overset{\text{induction}}{=}  &
	\sum_{j=i-q+1+s}^{q-1} 
		\binom{j}{i-q+1+s} t^{j-i+q-1-s}  \rho(f_{l, j})\\	
	= &	
	\sum_{j=\max \{ 0, i-q+1+s \}}^{q-1} 
	\binom{j}{i-q+1+s} t^{j-i+q-1-s}  \rho(f_{l, j}). \label{eq_t2}
\end{align}
Using Lucas's Theorem, we see that \cref{eq_t2} is equal to \cref{eq_t1}. As a result, the first condition in \cref{bed_2} follows. Hence, we have shown \cref{lemma_ind}. 

Now, we obtain
\begin{align*}
	g_{l} = f_{l,q-1} 
	\overset{\cref{lem_alpha_q}}{=} \sum_{j=q-1}^{q-1} 
	\binom{j}{q-1} t^{j-(q-1)}  \rho(f_{l, j})
	= \rho(f_{l, q-1})
	= \rho(g_{l}).
\end{align*}
In particular, $g_l$ is an invariant.

Next, we consider a decomposition $g_l = \sum_{r=1}^{R} \underbrace{\prod_{s=1}^{S_r} h_{r,s}}_{=: h_r}$ of $g_l$ into invariants such that $\deg(g_l) \geq \deg(h_{r,s})$.
Without loss of generality, we might make the following assumptions:

\begin{enumerate}
	\item \label{ges_deg} Every $h_r$ is homogeneous with respect to its total degree. Moreover, we have $\deg h_r = l(q-1)$: \\
	Every $h_{r,s}$ can be written as a sum of its homogeneous components $h_{r,s}^{(d)}$ with respect to $\deg$.
	Since $\rho(h_{r,s}) = \rho \left(\sum_d h_{r,s}^{(d)}\right) = \sum_d \rho \left(h_{r,s}^{(d)} \right)$ and $x_i \mapsto x_i, y_i \mapsto y_i + tx_i$, it follows that $\rho(h_{r,s}^{(d)})$ is the homogeneous component of degree $d$ (in $x_1, y_1, \dots, x_l, y_l$) of $\rho(h_{r,s})$. In particular, every $h_{r,s}^{(d)}$ is again an invariant. Hence, we can replace $h_r$ by suitable alternatives that are homogeneous with respect to $\deg$ and have degree $l(q-1)$.	
	
	\item \label{ges_hom_y} Every $h_r$ is homogeneous with respect to $\deg_{y_1, \dots, y_{l}}$. Furthermore, we have $\deg_{y_1, \dots, y_{l}} h_r = (l-1)(q-1)$: \\
	The argument is similar to \eqref{ges_deg} when we consider $\deg_{y_1, \dots,y_{l}, t}$.
	\item \label{ges_deg_X} Every $h_r$ is homogeneous with respect to $\deg_{x_1, \dots, x_{l}}$. Furthermore, we have $\deg_{x_1, \dots, x_{l}} h_r = q-1$. \\
	This follows from \eqref{ges_deg} and \eqref{ges_hom_y}.
	\item \label{ges_hom_xVyV} Every $h_{r}$ is homogeneous with respect to $\deg_{x_i, y_i}$ for every $1 \leq i \leq l$. Furthermore, $\deg_{x_i, y_i}h_r = q-1$.\\
	The argument is similar to \eqref{ges_deg} when we consider $\deg_{x_i, y_i}$.
	\item \label{sing_hom_xVyV} Every $h_{r,s}$ is homogeneous with respect to $\deg_{x_i, y_i}$ for every $1 \leq i \leq l$.\\
	The argument is similar to \eqref{ges_deg} when we consider $\deg_{x_i, y_i}$.
	\item \label{sing_not_const} Every $h_{r,s}$ is non-constant. \\
	Otherwise, we would be able to combine it with an additional factor by \eqref{ges_deg}.
	\item \label{ges_deg_y} Every $h_{r}$ satisfies $\deg_{y_i} h_{r}  \leq q-1 $ for every $1 \leq i \leq q-1$.\\
	This follows from $\deg_{y_i} h_{r} \leq \deg_{x_i,y_i} h_{r} \overset{\eqref{ges_hom_xVyV}}{=} q-1 $.
	\item \label{sing_deg_y} Every $h_{r,s}$ satisfies $\deg_{y_i} h_{r,s} \leq q-1 $ for every $1 \leq i \leq q-1$.\\
	This follows from \eqref{ges_deg_y} since $\sum_{s=1}^{S_r} \deg_{y_i} h_{r,s} =\deg_{y_i} h_{r} \leq q-1 $.
	\item \label{sing_deg_x} Every $h_{r,s}$ satisfies $\deg_{x_1, \dots, x_{l}} h_{r,s} >0$:\\
	Assume that $\deg_{x_1, \dots, x_{l}} h_{r,s} =0$. Because of \eqref{sing_not_const}, we might assume, without loss of generality, that $\deg_{y_1} h_{r,s}>0$. 
	Now, \eqref{sing_hom_xVyV} and \eqref{sing_deg_y} allow us to decompose $h_{r,s}$ as $h_{r,s} = \sum_{i=0}^w y_1^ix_1^{L-i} b_i$ for suitable $w,L \in \NN$ such that $1 \leq w \leq q-1$, $b_i \in k[x_2, y_2, \dots, x_l, y_l]$ and $b_w \neq 0$. Using $\deg_{x_1, \dots, x_{l}} h_{r,s} =0$, we obtain $h_{r,s} = y_1^w b_w$. Since $h_{r,s}$ is an invariant, it follows that
	\begin{align*}
		y_1^w b_w
		&= h_{r,s}
		= \rho \left( h_{r,s} \right)
		= \rho \left( y_1^w b_w\right)\\
		&= (y_1+tx_1)^w \rho(b_w)
		= \left(
		\sum_{j=0}^w \binom{w}{j} y_1^j (tx_j)^{w-j}
		\right) \rho(b_w).
	\end{align*}
	Comparing coefficients for $j=w$, we obtain $b_w = \rho(b_w)$. Comparing coefficients for $j \neq w$ leads to $b_w=0$. This is a contradiction.	
\end{enumerate}
Now, we obtain 
\[
q-1 
\overset{\eqref{ges_deg_X}}{=} \deg_{x_1, \dots, x_{l}} h_r
= \deg_{x_1, \dots, x_{l}} \prod_{s=1}^{S_r} h_{r,s}
= \sum_{s=1}^{S_r} \deg_{x_1, \dots, x_{l}} h_{r,s}
\overset{\eqref{sing_deg_x}}{\geq} \sum_{s=1}^{S_r} 1
= S_r
\]
for every $1 \leq r \leq R$. Thus, 
\begin{align*}
	l(q-1) 
	&\overset{\eqref{ges_deg}}{=}\deg h_r
	= \sum_{s=1}^{S_r} \deg h_{r,s}\\
	&\leq S_r \cdot \max_{1 \leq s \leq S_r}\{ \deg h_{r,s} \}
	\leq (q-1) \cdot \max_{1 \leq s \leq S_r} \deg h_{r,s} ,
\end{align*}
which leads to $l \leq \max_{1 \leq s \leq S_r} \deg h_{r,s}$.
In particular, we have $\beta(\balpha_q,\rho) \ge l$.
\end{proof}

\section{Molien's formula} \label{sMolien}

The main goal of this section is to generalize Molien's formula
to the case of a finite linearly reductive group scheme. This is done
via a lifting to the ring of Witt vectors, which we describe now.

If $k$ is algebraically closed and $p>0$, then we let
$W(k)$ be the ring of Witt vectors over $k$, which is a complete
DVR of characteristic zero with residue field $k$ and 
whose maximal ideal is generated by $p$.
If $G\to\Spec k$ is a finite and linearly reductive group
scheme, then there exists a flat 
lift $\mathcal{G}\to\Spec W(k)$, the \emph{canonical lift},
which is almost
unique, see \cite[Proposition 2.4]{LRQ}.
Let $K$ be the field of fractions of $W(k)$ and
let $\overline{K}$ be an algebraic closure of $K$.
Let $\mathcal{G}_{\overline{K}}:=\mathcal{G}\times_{\Spec W(k)}\Spec\overline{K}$
be the geometric generic fibre, which is a constant group
scheme over $\overline{K}$.
Then, still assuming $k$ to be algebraically closed,
then every finite-dimensional $k$-linear representation lifts
(uniquely up to isomorphism) to a $W(k)$-linear representation of $\mathcal{G}$.
From this, we obtain a specialisation map
$$
\mathrm{sp}\,:\,\mathrm{Rep}_{\overline{K}}(\mathcal{G}_{\overline{K}})
\,\to\,\mathrm{Rep}_k(G)
$$
of finite dimensional $k$-linear representations of $G$
and finite-dimensional $\overline{K}$-linear representations 
$\mathcal{G}_{\overline{K}}$.
The map $\mathrm{sp}$ respects degrees and simplicitiy and 
it is compatible with direct sums, tensor products, and duals,
see \cite[Proposition 2.9]{LRQ}.

If $G$ is a finite and linearly reductive group scheme
over a field $k$, we can define the 
\emph{abstract group} $G_{\mathrm{abs}}$
associated to $G$, see \cite[Definition 2.2]{LRQ}.
\begin{enumerate}
\item If $p=0$, then $G_{\mathrm{abs}}\cong G(\overline{k})$.
\item If $p>0$, then $G_{\mathrm{abs}}\cong \mathcal{G}'_{\overline{K}}(\overline{K})$,
where $\mathcal{G}'\to\Spec W(\overline{k})$ is the canonical lift
of $G_{\overline{k}}$.
\end{enumerate}
If $k$ is algebraically closed, then we can use the specialisation map $\mathrm{sp}$
to obtain another specialisation map
$$
\mathrm{sp}_\CC\,:\,\mathrm{Rep}_\CC(G_{\mathrm{abs}}) \,\to\,
\mathrm{Rep}_k(G)
$$
that respects degrees and simplicitiy and 
it is compatible with direct sums, tensor products, and duals,
see \cite[Corollary 2.11]{LRQ}.
In fact, this can be descended to $\overline{\QQ}$ or
even $\QQ(\zeta_{|G|})$, where $\zeta_{|G|}$ is a primitive $|G|$.th
root of unity.

\subsection{Lifting the invariant subring}
By the following proposition, we can also lift the invariant ring.

\begin{Proposition}
\label{prop: lift}
 Let $k$ be an algebraically closed field of characteristic $p>0$,
 let $G\to\Spec k$ be a finite and linearly reductive group scheme, 
 let $\rho:G\to\GL(V)$ be an $n$-dimensional representation, 
 and consider $S:=S^\bullet(V)\cong k[x_1,...,n]$ with the induced $G$-action.
 Let $\mathcal{G}\to\Spec W(k)$ be the canonical lift of $G$
 and let $\widetilde{\rho}:\mathcal{G}\to\GL(\mathcal{V})$ be the 
 lift of $\rho:G\to\GL(V)$.
 \begin{enumerate}
 \item
 We set
 \begin{equation}
 \label{eq: molien lift}
  \mathcal{S}_n\,:=\,S^n(\mathcal{V})\mbox{ \qquad and \qquad } 
  \mathcal{S} \,:=\, \bigoplus_{n\geq0}\mathcal{S}_n\,\cong\, W(k)[x_1,...,x_n]
 \end{equation}
 which turns $\mathcal{S}$ into a graded $W(k)$-algebra with a $\mathcal{G}$-action.
\item We obtain a commutative diagramme of graded $W(k)$-algebras
 $$
 \begin{array}{ccc}
  \mathcal{S} &\to& S\\
  \cup&&\cup\\
   \mathcal{S}^{\mathcal{G}} &\to&S^G,
   \end{array}
 $$
 where the horizontal arrows are reduction modulo $p$, where the 
 $\mathcal{G}$-action and the $G$-action in the top row are compatible,
 and where $\mathcal{S}^{\mathcal{G}}$ is a flat $W(k)$-algebra.
 In particular, $\mathcal{S}^G$ is a flat lift of $S^G$ over $W(k)$.
 \item Finally, we have
 $$
\dim_K S^n(\mathcal{V}_K)^{\mathcal{G}_K} \,=\, \dim_k  S^n(V)^G.
 $$
 \end{enumerate}
 \end{Proposition}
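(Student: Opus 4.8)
The plan is to treat the three parts in order, with essentially all of the substance concentrated in part~(2).

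Part~(1) is bookkeeping. Since $\rho$ lifts to $\widetilde\rho:\mathcal{G}\to\GL(\mathcal{V})$, the module $\mathcal{V}$ is free of rank $n$ over $W(k)$ and reduces to $V$ modulo $p$. Hence $\mathcal{S}=S^\bullet(\mathcal{V})$ is the polynomial ring $W(k)[x_1,\dots,x_n]$ with its standard grading, and $\widetilde\rho$ extends to an action of $\mathcal{G}$ by graded $W(k)$-algebra automorphisms. Reduction modulo $p$ then gives $\mathcal{S}\otimes_{W(k)}k\cong S$ compatibly with the $\mathcal{G}$- and $G$-actions, which is the top horizontal arrow of the diagram; restricting to invariants produces the bottom arrow $\mathcal{S}^{\mathcal{G}}\to S^G$. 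For flatness I would simply note that $\mathcal{S}^{\mathcal{G}}$ is a $W(k)$-submodule of the free, hence torsion-free, module $\mathcal{S}$ over the DVR $W(k)$, so it is torsion-free and therefore flat.

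The heart of part~(2) is to show that the natural map $\mathcal{S}^{\mathcal{G}}\otimes_{W(k)}k\to S^G$ is an isomorphism, i.e.\ that taking invariants commutes with reduction. This is exactly where linear reductivity is indispensable, as this compatibility fails for general finite group schemes in characteristic $p$. I would prove it in two steps mirroring \cref{sNoether}, using that the canonical lift inherits a decomposition $\mathcal{G}\cong\bmu\rtimes\Gamma$, where $\bmu$ is of multiplicative type lifting $G^\circ$ and $\Gamma$ is the constant group lifting $G^\et$, whose order $d:=|\Gamma|$ is prime to $p$. For $\bmu$, a representation is just a module graded by the (constant) character group, and $\mathcal{S}^{\bmu}$ is the degree-zero graded piece; this is a $W(k)$-module direct summand of $\mathcal{S}$, hence free over the local ring $W(k)$, and its formation visibly commutes with any base change, so $\mathcal{S}^{\bmu}\otimes_{W(k)}k\cong S^{G^\circ}$. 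For $\Gamma$, the averaging operator $e:=\tfrac1d\sum_{\gamma\in\Gamma}\gamma$ is defined over $W(k)$ precisely because $d\in W(k)^\times$; it is a $\mathcal{G}$-equivariant projection onto the invariants that commutes with base change, so $(\mathcal{S}^{\bmu})^{\Gamma}$ is again a free direct summand reducing to $(S^{G^\circ})^{G^\et}$. Finally, part~\ref{lGNB} of \cref{lGN} identifies $\mathcal{S}^{\mathcal{G}}=(\mathcal{S}^{\bmu})^{\Gamma}$ and $S^G=(S^{G^\circ})^{G^\et}$, which together yield $\mathcal{S}^{\mathcal{G}}\otimes_{W(k)}k\cong S^G$ and completes the flat-lift statement.

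For part~(3) I would reuse the explicit projector from part~(2): the composite of the grading projection onto $\mathcal{S}^{\bmu}$ with the averaging operator $e$ is a $W(k)$-linear retraction onto $\mathcal{S}^{\mathcal{G}}$ whose formation commutes with every base change $W(k)\to B$ in which $d$ is invertible, in particular with $B=k$ and $B=K$. Hence $\mathcal{S}_n^{\mathcal{G}}\otimes_{W(k)}K\cong S^n(\mathcal{V}_K)^{\mathcal{G}_K}$ and $\mathcal{S}_n^{\mathcal{G}}\otimes_{W(k)}k\cong S^n(V)^G$. Since $\mathcal{S}_n^{\mathcal{G}}$ is free over $W(k)$ of some finite rank $r_n$, both sides have dimension $r_n$, giving $\dim_K S^n(\mathcal{V}_K)^{\mathcal{G}_K}=\dim_k S^n(V)^G$. (Alternatively, part~(3) can be read off from the specialisation map of \cite[Proposition 2.9]{LRQ}, which preserves dimensions, simplicity and direct sums, so that the multiplicity of the trivial summand, namely the dimension of the invariants, is preserved.) The main obstacle is the isomorphism $\mathcal{S}^{\mathcal{G}}\otimes_{W(k)}k\cong S^G$ of part~(2): everything else is formal once this base-change compatibility is established, and the delicate point is that it genuinely fails outside the linearly reductive case, so the argument must exploit the specific structure of the canonical lift, handling the multiplicative-type part by a grading and the étale part by an averaging operator that survives over $W(k)$ exactly because its denominator is a unit.
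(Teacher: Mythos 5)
Your proof is correct, but it takes a genuinely different route from the paper. The paper gets part~(3) first, by invoking the specialisation map $\mathrm{sp}$ of \cite[Proposition 2.9]{LRQ}: since $\mathrm{sp}$ preserves dimensions, simplicity and direct sums, the multiplicity of the trivial summand in $S^n(\mathcal{V}_K)$ and in $S^n(V)$ agree, which is exactly the dimension equality; part~(2) then follows formally, combining this with the observation that $S^n(\mathcal{V})^{\mathcal{G}}$ is torsion-free (hence flat) as a submodule of the free module $S^n(\mathcal{V})$. You instead prove the base-change compatibility $\mathcal{S}^{\mathcal{G}}\otimes_{W(k)}k\cong S^G$ directly, by constructing an integral Reynolds operator from the splitting $\mathcal{G}\cong\bmu\rtimes\Gamma$ over $W(k)$: the grading projection handles the diagonalizable part, the averaging operator (with $|\Gamma|\in W(k)^\times$) handles the constant part, and Lemma~\ref{lGN}\ref{lGNB} glues the two, after which (3) is a corollary via freeness of each graded piece. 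Your approach buys more: the idempotent projector commutes with arbitrary base change, so you simultaneously obtain the paper's subsequent Reynolds-operator proposition (which the paper again derives from $\mathrm{sp}$ and isotypic decompositions), and your argument is self-contained modulo structure theory, whereas the paper's is shorter but leans on the lifting machinery of \cite{LRQ}. One point you should make explicit: the decomposition $\mathcal{G}\cong\bmu\rtimes\Gamma$ of the canonical lift is not automatic from the mere existence of a flat lift; it holds because $W(k)$ is strictly henselian with algebraically closed residue field, so the local structure theory of \cite{AOV} (linearly reductive finite flat group schemes are well-split, i.e.\ of the form $\Delta\rtimes Q$ with $\Delta$ diagonalizable and $Q$ constant tame, over such a base) applies, and reduction modulo $p$ together with the uniqueness of the connected-\'etale decomposition identifies $\Delta$ and $Q$ as lifts of $G^\circ$ and $G^{\et}$; with that citation supplied, your proof is complete.
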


\begin{proof}
For each integer $n\geq0$, we have a commutative diagramme
of $W(k)$-algebras 
$$
   \begin{array}{ccccc}
    S^n(\mathcal{V}\otimes_{W(k)}K)&\supseteq& S^n(\mathcal{V})&\to& S^n(V)\\
    \cup&& \cup && \cup\\
    S^n(\mathcal{V}\otimes_{W(k)}K)^{\mathcal{G}_K}&\supseteq& S^n(\mathcal{V})^{\mathcal{G}}&\to& S^n(V)^G\\
    \cup&& \cup && \cup \\
    K &\leftarrow& W(k) &\to& k
   \end{array}
$$
that is compatible with the natural actions of $\mathcal{G}_K$, $\mathcal{G}$, and $G$.
Thus, $\mathcal{S}$ is a graded $W(k)$-algebra with a $\mathcal{G}$-action, we have
$$
\bigoplus_{n\geq0} S^n(\mathcal{V})\,=\,S^\bullet(\mathcal{V}) \,\cong\,
W(k)[x_1,...,x_n],
$$
and it maps surjectively onto $S^\bullet(V)\cong k[x_1,..,x_n]$.
The kernel of this surjection is the ideal generated by $p$.

Being a $W(k)$-submodule of $\mathrm{Sym}^n(\mathcal{V})$, it follows
that $S^n(\mathcal{V})^{\mathcal{G}}$ is torsion-free as a $W(k)$-module.
Using properties of the specialisation map $\mathrm{sp}$, we obtain
the equality claimed in \cref{eq: molien lift},
which implies that $S^n(\mathcal{V})^{\mathcal{G}}$ is a flat $W(k)$-module.
\end{proof}

\begin{Remark}
If $k$ is an algebraically closed field of characteristic $p>0$ and
if $G\to\Spec k$ is a finite group scheme that is not linearly reductive, 
then a lift of $G$ over $W(k)$ or even over an extension of $W(k)$
may not exist:
See \cite[Introduction, Example (-B)]{MO} and \cite[page 266]{Oort}
for an example.
\end{Remark} 

\subsection{Reynolds operator}
A central tool in invariant theory are  Reynolds operators
and the following shows that we can even find such an operator
that is compatible with lifts.

\begin{Proposition}
 With assumptions and notations as in Proposition \ref{prop: lift},
 there exists a surjective
 homomorphism of $\mathcal{S}^{\mathcal{G}}$-modules
 $$
  \pi \,:\, \mathcal{S}\,\to\,\mathcal{S}^{\mathcal{G}}
 $$
 that splits the inclusion $\mathcal{S}^{\mathcal{G}}\to\mathcal{S}$
 as $\mathcal{S}^{\mathcal{G}}$-modules.
 In particular, $\mathcal{S}^{\mathcal{G}}$ is a direct summand
 of $\mathcal{S}$ as a $\mathcal{S}^{\mathcal{G}}$-module.
\end{Proposition}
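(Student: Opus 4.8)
The plan is to realize $\pi$ as a graded Reynolds operator for $\mathcal{G}$ over $W(k)$, that is, as a functorial, $\mathcal{G}$-equivariant projection onto invariants. The crucial input is that the canonical lift $\mathcal{G}\to\Spec W(k)$ is itself \emph{linearly reductive as a group scheme over the base $W(k)$}. This should be deduced from the fact that linear reductivity of a finite flat group scheme can be tested on fibres: the special fibre is $G$, which is linearly reductive over $k$ by assumption, and the generic fibre $\mathcal{G}_K$ lives over the characteristic-zero field $K$, hence is linearly reductive as noted in \cref{SPrelim}. Using the theory of linear reductivity over a general base from \cite{AOV} together with the properties of the canonical lift from \cite{LRQ}, this gives that $\mathcal{G}$ is linearly reductive over $W(k)$. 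Concretely, writing $\mathcal{G}=\Spec\mathcal{A}$ with $\mathcal{A}$ a finite flat Hopf algebra over $W(k)$, linear reductivity provides a normalized integral, and hence for every $\mathcal{G}$-representation $M$ that is finitely generated and flat over $W(k)$ a functorial $W(k)$-linear splitting $R_M\colon M\to M^{\mathcal{G}}$ of the inclusion $M^{\mathcal{G}}\hookrightarrow M$, given by $R_M=(\id_M\otimes\textstyle\int)\circ\rho_M$ for the coaction $\rho_M\colon M\to M\otimes_{W(k)}\mathcal{A}$.

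Granting this, I would define $\pi$ degree by degree. Each graded piece $\mathcal{S}_n=S^n(\mathcal{V})$ is a finitely generated flat $\mathcal{G}$-representation over $W(k)$, so I set $\pi_n:=R_{\mathcal{S}_n}\colon \mathcal{S}_n\to\mathcal{S}_n^{\mathcal{G}}$ and let $\pi:=\bigoplus_{n\geq0}\pi_n\colon\mathcal{S}\to\mathcal{S}^{\mathcal{G}}$. Since each $R_{\mathcal{S}_n}$ is a projection onto invariants, we have $\pi|_{\mathcal{S}^{\mathcal{G}}}=\id$, which immediately gives that $\pi$ splits the inclusion $\mathcal{S}^{\mathcal{G}}\hookrightarrow\mathcal{S}$ and in particular that $\pi$ is surjective.

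It then remains to check that $\pi$ is a homomorphism of $\mathcal{S}^{\mathcal{G}}$-modules, i.e. $\pi(fm)=f\,\pi(m)$ for $f\in\mathcal{S}^{\mathcal{G}}$ and $m\in\mathcal{S}$. Because $\mathcal{S}$ is a $\mathcal{G}$-comodule algebra and $f$ is invariant, multiplication by $f$ is a $\mathcal{G}$-equivariant, $W(k)$-linear (degree-shifting) map $m_f\colon\mathcal{S}_n\to\mathcal{S}_{n+\deg f}$. The functoriality of the Reynolds operator with respect to morphisms of $\mathcal{G}$-representations yields $\pi\circ m_f=m_f\circ\pi$, which is exactly the desired $\mathcal{S}^{\mathcal{G}}$-linearity. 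Equivalently, in the integral description this can be computed directly from the comodule-algebra identity together with the invariance of $f$. This proves that $\mathcal{S}^{\mathcal{G}}$ is a direct summand of $\mathcal{S}$ as an $\mathcal{S}^{\mathcal{G}}$-module.

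The main obstacle is the first paragraph. Over a field the Reynolds operator for a linearly reductive group is classical, but here the base is the discrete valuation ring $W(k)$, so one must know that the lift $\mathcal{G}$ is linearly reductive over $W(k)$ and that a functorial splitting (equivalently, a normalized integral) exists integrally, not merely after inverting $p$ or after reducing modulo $p$. Once this is secured via \cite{AOV, LRQ}, the construction and the verification of $\mathcal{S}^{\mathcal{G}}$-linearity are formal. A secondary point requiring care is that $\pi$ be defined compatibly with the grading and that multiplication by an invariant, although it shifts degree, is still a morphism of $\mathcal{G}$-representations, so that functoriality applies across distinct graded pieces.
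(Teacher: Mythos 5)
Your proof is correct, but it takes a genuinely different route from the paper's. The paper never invokes linear reductivity of $\mathcal{G}$ over the base $W(k)$: it works on the special fibre, decomposing each graded piece $S^n(V)$ into isotypical components for the simple $G$-representations over $k$ (giving the canonical projection $S^n(V)\to S^n(V)^G$ and a $G$-stable decomposition $S^n(V)\cong S^n(V)^G\oplus W_n$), and then transports this decomposition through the specialisation map $\mathrm{sp}$ of \cite{LRQ} to obtain a canonical $\mathcal{G}$-stable decomposition $S^n(\mathcal{V})\cong S^n(\mathcal{V})^{\mathcal{G}}\oplus\mathcal{W}_n$; summing over $n$ and observing that the decomposition respects $\mathcal{G}$-representations (so that multiplication by an invariant preserves the complement, making $\mathcal{W}_\bullet$ an $\mathcal{S}^{\mathcal{G}}$-submodule) yields the splitting. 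You instead argue integrally: you upgrade linear reductivity to $\mathcal{G}\to\Spec W(k)$ itself via the fibrewise criterion of \cite{AOV} (both geometric fibres are linearly reductive -- the closed one by hypothesis, the generic one because it lives in characteristic zero), extract a normalized integral (exactness of invariants applied to the counit surjection of the dual Hopf algebra does this in one line), and let naturality of the resulting Reynolds operator with respect to the equivariant, degree-shifting maps ``multiplication by a homogeneous invariant'' give $\mathcal{S}^{\mathcal{G}}$-linearity. What each approach buys: yours is more general and self-contained in spirit -- it works for any finite flat linearly reductive group scheme over any base and needs none of the representation-lifting machinery -- but it leans on the heavier base-scheme theory of \cite{AOV}, and you should pin the citation down: the fibrewise criterion is Theorem 2.16 of \cite{AOV}, not the base-change statement in Proposition 2.4 that the paper quotes elsewhere. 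The paper's proof, by contrast, stays entirely within the toolkit it has already assembled (the specialisation map and its compatibility with direct sums and simplicity), at the cost of being tied to the canonical lift over $W(k)$. Your only soft spot, which you correctly flagged yourself, is the existence of the normalized integral over the DVR rather than over a field; the counit argument just indicated closes it, so the proof stands.
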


\begin{proof}
Being linearly reductive, we can decompose the $G$-module
$S^n(V)$ into isotypical components for the simple $G$-representations.
This yields a canonical projection operator $S^n(V)\to S^n(V)^G$
and thus, a canonical direct sum decomposition 
$S^n(V)\cong S^n(V)^G\oplus W_n$.
Using the specialisation morphism $\mathrm{sp}$, we obtain a 
canonical direct sum decomposition
$$
S^n(\mathcal{V}) \,\cong\, S^n(\mathcal{V})^{\mathcal{G}}\,\oplus\,\mathcal{W}_n,
$$
which is $\mathcal{G}$-stable,
and a canonical projection $\pi_n:S^n(\mathcal{V})\to S^n(\mathcal{V})^{\mathcal{G}}$,
which is $\mathcal{G}$-equivariant (w.r.t. to the trivial operation on the right).
Taking direct summands, we obtain a direct sum decomposition
$\mathcal{S}=\mathcal{S}^{\mathcal{G}}\oplus W_\bullet$ of $W(k)$-modules, 
where $W_\bullet:=\bigoplus_{n\geq0} W_n$.
Moreover, this decomposition also respects
$\mathcal{G}$-representations, which implies
that $W_\bullet$ is a $\mathcal{S}^{\mathcal{G}}$-module.
\end{proof}

\subsection{Molien series}
If a group scheme $G\to\Spec k$ acts linarly 
on a polynomial ring $S=k[x_1,...,x_n]$,
then we define the \emph{Molien series} to be
$$
 \mathrm{Molien}(S^G,t) \,:=\,
 \sum_{i=0}^\infty  \dim_k(S^G_i)\cdot t^i \,\in\, \ZZ[[t]],
$$
where $S^G_i\subseteq S^G\subseteq S\cong k[x_1,...,x_n]$ 
denotes the $k$-vector space of polynomials of degree $i$ 
of $S^G$.

Being linear, the $G$-action on $S$ is induced from the $k$-linear
representation $G\to\GL(S_1)\cong\GL_{n,k}$.
If $G\to\Spec k$ is linearly reductive, then we have 
the abstract group $G_{\mathrm{abs}}$, 
a representation $\rho_\CC:G_{\mathrm{abs}}\to\GL_n(\CC)$,
and we define the \emph{abstract Molien series} to be
$$
 \mathrm{Molien}_{\mathrm{abs}}(S^G,t) \,:=\,
 \sum_{i=0}^\infty  \dim_k(\CC[x_1,...,x_n]^{G_{\mathrm{abs}}}_i)\cdot t^i \,\in\, \ZZ[[t]].
$$
Combining Proposition \ref{prop: lift} with Molien's
theorem \cite{Molien}, see, for example, \cite[Section 3.4]{DK} for a modern account, 
we obtain the following.

\begin{Theorem}[Molien formula]
\label{thm: molien}
Let $k$ be a field of characteristic $p\geq0$ and let
$\overline{k}$ be an algebraic closure of $k$.
Let $G\to\Spec k$ be a finite group scheme, let
$G\to\GL(V)$ be an $n$-dimensional representation, and consider the
induced $G$-action on $S:=S(V)\cong k[x_1,..,n]$.
 \begin{enumerate}
 \item  If $p=0$, then 
 $$
  \mathrm{Molien}(S^G,t) \,=\, 
  \frac{1}{|G|}\,
  \sum_{g\in G(\overline{k})} \frac{1}{\det(1-g^{-1}t,V_{\overline{k}})} 
 $$
 and it coincides with $\mathrm{Molien}_\CC(\CC[x_1,...,x_n]^G,t)$.
 \item
 If $p>0$, then assume moreover that $G\to\Spec k$ is linearly reductive.
 \begin{enumerate}
 \item We have
 $$
 \mathrm{Molien} (S^G,t) \,=\, 
 \mathrm{Molien}( S^G\otimes_k\overline{k},t) \,=\,
 \mathrm{Molien}( (S\otimes_k\overline{k})^{G_{\overline{k}}},t).
 $$
 \item 
 If $k$ is moreover algebraically closed, then we have in
  the notations from Proposition \ref{prop: lift} 
 $$
 \mathrm{Molien}(S^G,t) \,=\, \mathrm{Molien}(\mathcal{S}_K^{\mathcal{G}_K},t)
 $$
 and the latter has been determined in (1).
 \end{enumerate}
 \end{enumerate}
In both cases, the Molien series coincide with
 \begin{eqnarray*}
  \mathrm{Molien}_{\mathrm{abs}}(S^G,t) &=&
  \mathrm{Molien} (\CC[x_1,...,x_n]^{G_{\mathrm{abs}}},t) \\
  &=& \frac{1}{|G_{\mathrm{abs}}|}\,
  \sum_{g\in G_{\mathrm{abs}}} \frac{1}{\det(1-g^{-1}t,\CC^n)}.
 \end{eqnarray*}
\end{Theorem}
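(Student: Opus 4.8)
The plan is to chain the classical Molien theorem together with the lifting result of \cref{prop: lift}, thereby reducing every characteristic-$p$ computation to one in characteristic zero. I would begin with the characteristic-zero case~(1). Using the identity $\dim_k S_i^G = \dim_{\overline k}\bigl((S\otimes_k\overline k)_i^{G_{\overline k}}\bigr)$ established in the proof of \cref{lem: reduce to kbar}, the Molien series is unchanged under base change to $\overline k$, so I may assume $k=\overline k$. Then $G$ is the constant group scheme attached to the finite group $\Gamma := G(\overline k)$, and the $G$-action on $S$ is an ordinary linear representation of $\Gamma$. The classical Molien theorem \cite{Molien}, which is valid over any field of characteristic zero since $\overline k$ contains all roots of unity, then yields the asserted closed formula $\frac{1}{|G|}\sum_{g\in G(\overline k)}\det(1-g^{-1}t,V_{\overline k})^{-1}$.

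For the coincidence with the complex series in~(1), I would use that $\dim_k S_i^G$ equals the multiplicity of the trivial representation in $S^i(V)$, namely $\frac{1}{|\Gamma|}\sum_{g\in\Gamma}\chi_{S^i(V)}(g)$, a rational integer that is insensitive to the chosen characteristic-zero coefficient field. Descending $V$ to $\QQ(\zeta_{|\Gamma|})$ and fixing an embedding into $\CC$ identifies the eigenvalue multisets of each $g$ as the same roots of unity, so the two closed formulas are literally equal rational functions, giving $\mathrm{Molien}(S^G,t)=\mathrm{Molien}_\CC(\CC[x_1,\ldots,x_n]^G,t)$.

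Next I turn to $p>0$, where I assume $G$ linearly reductive. Part~(2a) is once more base-change invariance: linear reductivity passes to $G_{\overline k}$ by \cite[Proposition~2.4]{AOV}, and the dimension identity from \cref{lem: reduce to kbar} shows the three Molien series agree. For~(2b), with $k=\overline k$, the heart of the argument is \cref{prop: lift}(3), which gives $\dim_K S^n(\mathcal V_K)^{\mathcal G_K}=\dim_k S^n(V)^G$ for every $n$; summing against $t^n$ produces $\mathrm{Molien}(S^G,t)=\mathrm{Molien}(\mathcal S_K^{\mathcal G_K},t)$. Since $K$ has characteristic zero, I may apply case~(1) to the group scheme $\mathcal G_K\to\Spec K$: after base change to $\overline K$ the fibre $\mathcal G_{\overline K}$ is the constant group scheme with group of points $\mathcal G_{\overline K}(\overline K)=G_{\mathrm{abs}}$, so the series is computed by the closed formula of~(1).

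It remains to identify all of these with the abstract Molien series, and this is the step I expect to be the main obstacle. By construction the representation $\rho_\CC:G_{\mathrm{abs}}\to\GL_n(\CC)$ arises from the $\overline K$-representation of $G_{\mathrm{abs}}$ on $\mathcal V_{\overline K}$ through the common target of $\mathrm{sp}$ and $\mathrm{sp}_\CC$, which respect degrees and simple constituents, are compatible with direct sums, tensor products, and duals, and descend to $\QQ(\zeta_{|G|})$. Fixing compatible embeddings $\QQ(\zeta_{|G|})\hookrightarrow\overline K$ and $\QQ(\zeta_{|G|})\hookrightarrow\CC$, I must check that these two representations become isomorphic, so that for each $g\in G_{\mathrm{abs}}$ the eigenvalues of $g^{-1}$---all roots of unity---agree on the two sides, making the two instances of the closed formula the same rational function. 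Granting this, the displayed chain of equalities for $\mathrm{Molien}_{\mathrm{abs}}(S^G,t)$ follows, completing the proof. The careful bookkeeping of the specialisation maps, and in particular verifying that the descent to $\QQ(\zeta_{|G|})$ is matched by the two embeddings, is the only genuinely delicate point; everything else is an assembly of \cref{prop: lift} and the classical theorem.
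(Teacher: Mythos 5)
Your proposal follows the same skeleton as the paper's proof: reduce to $k=\overline{k}$ via the graded isomorphism $(S^G)\otimes_k\overline{k}\cong(S\otimes_k\overline{k})^{G_{\overline{k}}}$, invoke the classical Molien formula for the constant group scheme in characteristic zero, and use \cref{prop: lift} to equate $\mathrm{Molien}(S^G,t)$ with $\mathrm{Molien}(\mathcal{S}_K^{\mathcal{G}_K},t)$ when $p>0$; your character-theoretic treatment of the characteristic-zero coincidence with the complex series is also fine and standard. Where you genuinely diverge is the final identification with $\mathrm{Molien}_{\mathrm{abs}}$: you propose to compare the two \emph{closed formulas} directly, descending both the $\overline{K}$-representation $\mathcal{V}_{\overline{K}}$ and $\rho_\CC$ to $\QQ(\zeta_{|G|})$ and matching eigenvalue multisets under compatible embeddings --- and you explicitly leave the required isomorphism of the two descended representations unverified. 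The paper never compares eigenvalues at all; it compares dimensions. Since $\mathrm{sp}$ and $\mathrm{sp}_\CC$ respect degrees, simplicity, direct sums, tensor products, and duals \cite[Corollary 2.11]{LRQ}, and since every representation of the linearly reductive $G$ is a direct sum of simples, the multiplicity of the trivial representation is preserved, giving $\dim_k S^i(V)^G=\dim_\CC S^i(\CC^n)^{G_{\mathrm{abs}}}$ directly (symmetric powers being handled by \cref{prop: lift}). Each side then receives its own application of the classical Molien formula, and the equality of the two closed expressions over $\overline{K}$ and over $\CC$ falls out as a \emph{corollary} of both being equal to $\mathrm{Molien}(S^G,t)$, rather than serving as an input that must be checked term by term. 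Your flagged step is in fact true --- $\mathrm{sp}_\CC$ is constructed in \cite{LRQ} precisely by composing $\mathrm{sp}$ with base change through a common descent to $\overline{\QQ}$ (or $\QQ(\zeta_{|G|})$), so $\rho_\CC$ attached to $V$ does correspond to $\mathcal{V}_{\overline{K}}$ under compatible embeddings --- so your argument can be completed by citing that construction; but the paper's dimension count makes this bookkeeping unnecessary and is the more economical route.
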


\begin{proof}
We have $(S^G)\otimes_k\overline{k}\cong (S\otimes_k\overline{k})^{G_{\overline{k}}}$
and this isomorphism respects the grading, which implies that the Molien series
does not change.
Thus, we may assume $k$ to be algebraically closed.

(1) If $p=0$, then $G$ is the constant group scheme associated to $G(k)$
and then, the claim becomes the classical Molien formula.

(2).(b) If $p>0$, then Proposition \ref{prop: lift} implies that the Molien series
of $S^G$ coincides with that of $K[x_1,...,x_n]^{\mathcal{G}_K}$
and then, claim (2) follows from the already established
characteristic zero case.

We have that $\dim_kS_i^G$ coincides with $\dim_k S^i(V)^G$.
Since the specialisation map $\mathrm{sp}$ respects tensor products, direct sums, etc., 
this dimension coincides with $\dim_\CC S^i(\CC^n)^{G_{\mathrm{abs}}}$.
Thus, claim (3) follows for the classical Molien formula associated to the linear
$G_{\mathrm{abs}}$-action on $\CC[x_1,...,x_n]$ defined by $\rho_\CC$.
\end{proof}

\begin{Remark}
\label{rem: molien}
In the case, where $G$ is a constant group scheme,
we have $G(k)=G(\overline{k})\cong G_{\mathrm{abs}}$ and then, 
we recover Molien's formula for finite groups of order prime to $p$.
Note however, that our version of Molien's formula also holds
if $G(k)$ is strictly smaller than $G_{\mathrm{abs}}$.
The following example illustrates this phenomenon.
\end{Remark}

\begin{Example}
\label{ex: molien}
Let $k$ be a field of characteristic $p\geq0$
and consider the group scheme $\bmu_n$, which is of length $n$ over $k$.
We view the standard embedding $\bmu_n\to\GG_{m,k}$
as a representation $\rho:\bmu_n\to\GL(V)$ with $\dim_kV=1$.
Then, $k[V]\cong k[x]$, $k[V]^{\bmu_n}\cong k[x^n]\subset k[x]$ 
and thus, we find
$$
\mathrm{Molien}\left(k[x]^{\bmu_n},t\right) \,=\,\frac{1}{1-t^n}.
$$
However, $\bmu_n(k)$ can even be trivial 
if $p>0$ or if $k$ is not algebraically closed.
For example, if $n=p>0$ or if $k\subseteq\RR$ and $n$ is odd, then $\bmu_n(k)=\{1\}$
Thus, the Molien series is usually not a function of the group $\bmu_n(k)$
of $k$-rational points of $\bmu_n$. 
\end{Example}

Our final example shows that, loosely speaking, having full control over
the graded parts of an invariant ring does not imply having full control over 
the beta-number.

\begin{Example}
 Let $R$ be a ring.
 The symmetric group $\mathfrak{S}_n$ on $n$ letters acts on 
 $V:=R^n$ by permuting the basis elements and the induced action on 
 $S:=S(V)\cong R[x_1,...,x_n]$ acts by permuting the variables.
  Let $G\subseteq\mathfrak{S}_n$ be a subgroup
  and set $\rho$ to be the $R$-linear representation $G\to\mathfrak{S}_n\to\GL(V)$.
  \begin{enumerate}
  \item If $R=k$ is a field of characteristic $p$, then the Molien series
  of $S^G\subseteq S$ does not depend on $p$, see \cite[Proposition 3.4.4]{DK}. 
  \item If $R=k$ is a field of characteristic $p$, then $\beta(S^G)$ 
  may depend on $p$, as the following example
  shows:
 Consider the special case $n=6$ and let $G$ be the subgroup of 
 $\mathfrak{S}_6$ that is generated by the transposition $(1,2)(3,4)(5,6)$, which is isomorphic to $\ZZ/2\ZZ$.
 Let $S^G\subseteq S$ be the invariant subring.
 \begin{enumerate}
 \item If $p\neq2$, then we have $\beta(G, \rho)=2$, which follows, for example,
 from the classical Noether bound.
 \item 
 In any characteristic, we have the invariants of degree $\leq2$
  $$
  \begin{array}{lll}
    f_1 = x_1 + x_2,&
    f_2 = x_3 + x_4,&
    f_3 = x_5 + x_6,\\
    f_4 = x_1\cdot x_2,&
    f_5 = x_3\cdot x_4,&
    f_6 = x_5\cdot x_6,\\
    f_7 = x_1\cdot x_3 + x_2\cdot x_4,&
    f_8 = x_1\cdot x_5 + x_2\cdot x_6,&
    f_9 = x_3\cdot x_5 + x_4\cdot x_6.
    \end{array}
  $$
  Consider the invariant $f=x_1x_4x_5+x_2x_3x_6$ and note that
  $$
  2f \,=\, f_1\cdot f_2\cdot f_3 - f_1\cdot f_9  + f_2\cdot f_8 - f_3\cdot f_7,
  $$
  that is, if $p\neq2$, then $f$ can be expressed in terms of the $f_1,...,f_9$.
 
  If $p=2$, then one can check that $f$ is not expressible by invariants of degree $\leq2$,
  which shows $\beta(G,\rho)\geq3$ in this case.
  \end{enumerate}
  \item If $k$ is an algebraically closed field of characteristic $p=2$ and 
  $R=W(k)$, we find that $S^G\subset S$ is a flat lift of 
  $k[V\otimes_Rk]^G\subset k[V\otimes_Rk]$ to $R$,
  compatible with the $G$-action.
  Let $K$ be the field of fractions of $W(k)$.
  Although the Molien series of $k[V]^G$ and $K[V]^G$ coincide, we have
  $\beta(K[V]^G)<\beta(k[V]^G)$, that is,
  $\beta$ may change under lifting.
  Note however, that in this case
  the (associated) finite group scheme $\mathcal{G}\to\Spec W(k)$ is
  not linearly reductive since its special fibre $G\to\Spec k$ is not linearly reductive
  (it is \'etale, but not of order prime to $p$).
   \end{enumerate}
  \end{Example}

\end{document}